\newtheorem{theorem}{Theorem}[section]
\newtheorem{lemma}[theorem]{Lemma}
\theoremstyle{definition}
\newtheorem{example}[theorem]{Example}
\theoremstyle{remark}
\newcommand{\ep}{\epsilon}
\numberwithin{equation}{section}
\begin{document}

\title{Inexact Online Proximal Mirror Descent for time-varying composite optimization}

\author{Woocheol Choi}
\address{Department of Mathematics, Sungkyunkwan University, Suwon 440-746, Korea (Republic of)}
\email{choiwc@skku.edu}
\author{Myeong-Su Lee }
\address{Department of Mathematics, Sungkyunkwan University, Suwon 440-746, Korea (Republic of)}
\email{msl3573@skku.edu}
\author{SEOK-BAE YUN }
\address{Department of Mathematics, Sungkyunkwan University, Suwon 440-746, Korea (Republic of)}
\email{sbyun01@skku.edu}

\maketitle

\begin{abstract}
	In this paper, we consider the online proximal mirror descent for solving the time-varying composite optimization problems. For various applications, the algorithm naturally involves the errors in the gradient and proximal operator. We obtain sharp estimates on the dynamic regret of the algorithm when the regular part of the cost is convex and smooth. If the Bregman distance is given by the Euclidean distance, our result also improves the previous work in two ways: (i) We establish a sharper regret bound compared to the previous work in the sense that our estimate does not involve $O(T)$ term appearing in that work. (ii) We also obtain the result when the domain is the whole space $\mathbb{R}^n$, whereas the previous work was obtained only for bounded domains. We also provide numerical tests for problems involving the errors in the gradient and proximal operator.
\end{abstract}
\section{Introduction}

Time-varying optimization has been gaining increasing attention in recent years, arising in various application fields such as target tracking \cite{DSH}, model predictive control \cite{PMR}, and machine learning \cite{Ziegel}. This type of optimization problem is characterized by cost functions, constraints, and feasible domains that vary over time \cite{DSBM,SDPLG}. In particular, the time-varying loss functions are often given by non-smooth functions. A common example of those is that the loss function is given by the sum of a non-smooth regularization part and a regular objective function. This regularization part effectively handles noise and sparsity, and also prevent over-fitting \cite{LL,lasso,ZHHT}. 

Let us consider such time-varying non-smooth composite optimization problems:
\begin{align*}
	\min_{x\in\Omega\subset\mathbb{R}^n}f_k(x):=g_k(x)+h_k(x),\ k=1,2,\cdots,
\end{align*}
where $\Omega$ is a convex domain in $\mathbb{R}^n$, and the function $g_k:\mathbb{R}^n\rightarrow\mathbb{R}$ is an objective function, and the function $h_k:\mathbb{R}^n\rightarrow\mathbb{R}$ is a non-smooth regularizer. The most popular optimization scheme for such problems is the online proximal gradient method \cite{BD,DSR}. In this paper, we investigate the performance of the online proximal mirror descent algorithm. The algorithm is a generalization of the online proximal gradient descent, building on the Mirror Descent(MD). MD was originally introduced by Nemirovsky and Yudin \cite{MY}, generalizing the standard gradient descent (GD). It is known that MD converges faster than GD provided the Bregman divergence of MD is chosen suitably \cite{BT, BMN}. MD has been used effectively for large-scale optimization problems \cite{DAJ, KBB, NL,NB, R, RB}. Further, we consider a worse situation where the player attains the full functional form of $h_k$ but only an inexact gradient $\nabla g_k(x_{k-1})+e_k$ with some error $e_k\in\mathbb{R}^n$ \cite{DSR,SRB} and the proximal part is computed inexactly \cite{ASD}. Under this environment, the inexact online proximal mirror descent is presented as follows:
\begin{align}\label{algorithm}
	x_k\approx_{\epsilon_k}\underset{x\in\Omega}{\arg\min}\Phi_{\lambda h_k}(x):=h_k(x)+\langle\nabla g_k(x_{k-1})+e_k,x\rangle+\frac{1}{\lambda}V_\omega(x,x_{k-1}),
\end{align}
where the Bregman divergence $V_\omega$ is given in Section 2, and the notation 
$$x_k\approx_{\epsilon_k}\underset{x\in\Omega}{\arg\min}\Phi_{\lambda h_k}(x)$$  means that for each $k$, there is a positive constant  $\epsilon_k$ such that
\begin{align}\label{inexact}
	\|x_k-\underset{x\in\Omega}{\arg\min}\Phi_{\lambda h_k}(x)\big\|\leq \epsilon_k.
\end{align}

As a performance measure, we consider the following dynamic regret $R_T$:
\begin{align*}
	R_T:=\sum_{k=1}^Tf_k(x_k)-\sum_{k=1}^Tf_k(x_k^*),
\end{align*}
where the point $x_k^*$ is an optimal point of the loss function $f_k$ at the time instant $k$. This measures the difference between what is done by the player and that by the optimal.
In the literature, a bound of the Dynamic regret was first addressed in \cite{HW} for non-differentiable, lipschitz and continuous objective function where the following dynamic regret bound
\begin{align*}
	R_T=O\left(\sqrt{T}\left(1+\sum_{k=1}^{T-1}\norm{\theta_{k+1}-\Phi_k(\theta_k)}\right)\right)
\end{align*}
was derived for the dynamic mirror descent (DMD) with a dynamical model $\Phi_k$.
The authors in \cite{DSR} considered the case where the objective functions are smooth and strongly convex but the regularizer are non-differentiable, which obtained the following regret bound of the online proximal gradient descent with inexact  gradient:
\begin{align*}
	R_T = O(1+\Sigma_T + E_T),
\end{align*}
where $\Sigma_T = \sum_{k=1}^{T} \|x_k^* -x_{k-1}^*\|$ and $E_T = \sum_{k=1}^T \|e_k\|$.
The work \cite{ASD} considered the online proximal gradient descent where not only the error in the gradient is considered but also the proximal part is solved approximately. In \cite{ASD}, the following dynamic regret bounds were obtained for the objective functions being smooth and strongly convex:
	\begin{equation*}
		R_T = O(1 + \Sigma_{T} + P_T + E_T),
	\end{equation*}
and for the objective functions being smooth and convex:
	\begin{equation}\label{eq-1-10}
		R_T = O(1+T + \Sigma_T +\bar{\Sigma}_T + P_T + \bar{P}_T + E_T),
	\end{equation}
	where $\bar{\Sigma}_T = \sum_{k=1}^T \|x_k^* - x_{k-1}^*\|^2$. Also, $P_T = \sum_{k=1}^T \ep_k$ and $\bar{P}_T = \sum_{k=1}^T \ep_k^2$.

In addition, we refer to the recent paper \cite{KMD} where the dynamic regret was obtained under the Polyak-Lojasiewicz condition. The asymptotical tracking error was studied in \cite{ASD,ZDH}. We also refer to the references \cite{DSST,HW,YHHX} for the static regret of the time-varying composite optimization.
 
The analysis of the online proximal gradient descent becomes more difficult when inexact gradient is used and the proximal part is solved inexactly. Moreover, a dynamic regret bound is more delicate to obtain when the strongly convexity assumption on the loss function is missing. In particular, we observe that the regret bound \eqref{eq-1-10} involves $O(T)$ which may not be small enough even in the sense of averaging $R_T/T$. The boundedness assumption on domain was also necessarily imposed in the above results when the loss function is not strongly convex. Upon these difficulties, we aim to establish sharp estimates on the dynamic regret of Inexact Online Proximal Gradient Descent for the convex case. Specifically, our main contributions are as follows. We obtain a bound of the dynamic regret not involving an $O(T)$ term. This definitely improves the previous bound \eqref{eq-1-10} obtained in \cite{DSR}, since our result is obtained for online proximal mirror descent which generalize the online proximal gradient descent. Furthermore, without the assumption of the strongly convexity, we obtain a dynamic regret bound under the unbounded domain condition, whereas the previous results \cite{ASD,HW,YHHX} requires bounded domains. The above mentioned existing works and our works are summarized in Table 1.
\begin{table}[t!]
	\centering
	\scriptsize
	\renewcommand{\arraystretch}{1.2}
	\caption{\bf Summary of the derivations of optimization algorithms}
	\begin{tabular} { | p{0.8cm}|p{1.5cm} | c | c | p{1.5cm}|c|}
		\hline
		Ref. & Objective function & Bound of Dynamic Regret  & Mirror & Inexact & domain \\ 
		\hline 
		&&&&&\\[-1em]
		
		&&&&&\\[-1em]
		\cite{HW} &  Convex \&  Lipschitz &
		$O\left(\sqrt{T}\left(1+\sum_{k=1}^{T-1}\norm{\theta_{k+1}-\Phi_k(\theta_k)}\right)\right)$
		& Yes &No & bounded\\ 
		&&&&&\\[-1em]
		\hline
		&&&&&\\[-1em]
		\cite{DSR}&S.C \& Smooth & \vtop{\hbox{}\hbox{}\hbox{$O(1+\Sigma_T + E_T)$}} & No & Inexact gradient& unbounded\\ 
		&&&&&\\[-1em]
		\hline
		&&&&&\\[-1em]
		\cite{ASD} &S.C \& Smooth & $O(1 + \Sigma_{T} + P_T + E_T)$& No&gradient \& proximal &unbounded \\
		&&&&&\\[-1em] 
		\hline
		\cite{ASD} &Convex \& Smooth & $O(1+T + \Sigma_T +\bar{\Sigma}_T + P_T + \bar{P}_T + E_T)$ & No&gradient \& proximal & bounded \\
		&&&&&\\[-1em] 
		\hline     
		&&&&&\\[-1em]
		This paper & Convex \& Smooth & $O(1 + \bar{\Sigma}_T+ P_T + \bar{P}_T + \bar{E}_T)$  &Yes & gradient \& proximal
		& bounded\\ 
		\hline
		&&&&&\\[-1em]
		This paper & Convex \& Smooth & $O\Big (1 + P_T + \overline{\Sigma}_T + (E_T +P_T + \Sigma_T)^2\Big)$  &Yes & gradient \& proximal
		& unbounded\\ 
		\hline
	\end{tabular}
	\centering\newline\newline\newline
	{\fontsize{8pt}{0}
		Here, S.C means the strongly convexity, and we used the following notations: $\Sigma_{T}=\sum_{k=1}^T\|x_k^*-x_{k-1}^*\|$, $\bar{\Sigma}_{T}=\sum_{k=1}^T\|x_k^*-x_{k-1}^*\|^2$, $E_T=\sum_{k=1}^T\|e_k\|$, $\bar{E}_T=\sum_{k=1}^T\|e_k\|^2$, $P_T=\Sigma_{k=1}^T\epsilon_k$, and $\bar{P}_T=\Sigma_{k=1}^T\epsilon_k^2$.}
\end{table}

This paper is organized as follows. In the following section, we give the assumption used throughout this paper and state the main theorems of this paper. In Section 3, we obtain preliminary estimates for \eqref{algorithm} based on the convexity of the cost functions and the properties of the Bregman divergence. Section 4 is devoted to proving our main theorems. In Section 5, the numerical tests for the algorithm \eqref{algorithm} are given.  

\section{Main results}
In this section, we first give the assumptions on the loss functions and introduce the Bregman divergence used in the algorithm \eqref{algorithm}. We will then state the convergence results of this paper.

Throughout the paper, we will consider the loss functions and the regularizer satisfying the following assumptions.\\
{\bf Assumption 1}
\begin{itemize}
\item  $g_k$ is a closed, convex and proper function with a $L_k$-lipschitz continuous gradient at each time $k=1,2,\cdots$. We denote $L=\max_{k=1,...,T}{\{L_k\}}$ throughout the paper. 
\item $h_k$ is a $B_k$-lipschitz continuous  and convex regularizer for all $k=1,2,\cdots$. We also denote $B=\underset{k=1,...,T}{\max}{\{B_k\}}$. 
\item $\Omega$ is a convex set in $\mathbb{R}^n$.
\end{itemize}
The Bregman divergence $V_\omega:\mathbb{R}^n\times\mathbb{R}^n\rightarrow\mathbb{R}$ in the algorithm \eqref{algorithm}, associated with the distance-measuring function $\omega:\mathbb{R}^n\rightarrow\mathbb{R}$, is given by
\begin{equation}\label{eq-1-20}
	V_\omega(x,y):=\omega(x)-\omega(y)-\langle\nabla\omega(y),x-y\rangle,
\end{equation} 
where the distance-measuring function $\omega$ is assumed to satisfy the following conventional assumption (see the literature \cite{LY,YHHX}):\\
{\bf Assumption 2} 
\begin{itemize}
	\item $\omega$ is $\sigma_\omega$-strongly convex and has $G_\omega$-lipschitz gradients.
\end{itemize}

Now we state the detail of our main results. First, we state the result when the domain $\Omega$ is bounded. 
\begin{theorem}\label{thm-1-1} (Bounded domain)
	Assume that the domain $\Omega$ is bounded and the step-size satisfies   $\lambda\leq\frac{2\sigma_\omega}{L}$. Then, for the sequence $\{x_k\}_{k=1}^T$ generated by the algorithm \eqref{algorithm} with a initial point $x_0\in\Omega$, we have the following dynamic regret bound: \begin{align*}
		\begin{split}
			R_T&=\sum_{k=1}^{T}(f_k(x_k)-f_k(x_k^*))
			\\
			&= O(1+\Sigma_T+\bar{\Sigma}_T + P_T +E_T).
		\end{split}
	\end{align*}
\end{theorem}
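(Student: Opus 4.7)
The overall plan is to first pass from the inexact iterate $x_k$ to the exact proximal minimizer $\tilde{x}_k := \arg\min_{x\in\Omega}\Phi_{\lambda h_k}(x)$ and split
\[
R_T = \sum_{k=1}^T \bigl(f_k(x_k)-f_k(\tilde{x}_k)\bigr) + \sum_{k=1}^T \bigl(f_k(\tilde{x}_k)-f_k(x_k^*)\bigr).
\]
The first sum is controlled by the Lipschitz constants $B$ and $L$ together with $\|x_k-\tilde{x}_k\|\le\epsilon_k$ (and the diameter of $\Omega$ to bound $\|\nabla g_k(x_{k-1})\|$ along the iterates), producing an $O(P_T)$ contribution. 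The main content of the proof is an estimate of the second sum built from three ingredients: the $L$-smoothness of $g_k$, the convexity of $g_k$ and $h_k$, and the variational characterization of $\tilde{x}_k$.

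Concretely, $L$-smoothness of $g_k$ at $x_{k-1}$ gives
\[
g_k(\tilde{x}_k)-g_k(x_k^*)\le \langle\nabla g_k(x_{k-1}),\tilde{x}_k-x_k^*\rangle+\tfrac{L}{2}\|\tilde{x}_k-x_{k-1}\|^2,
\]
while the standard three-point inequality for the Bregman-regularized minimizer $\tilde{x}_k$ (convexity of $h_k$ plus $\sigma_\omega/\lambda$-strong convexity of $V_\omega(\cdot,x_{k-1})$) yields
\[
h_k(\tilde{x}_k)-h_k(x_k^*)\le \langle \nabla g_k(x_{k-1})+e_k,x_k^*-\tilde{x}_k\rangle+\tfrac{1}{\lambda}\bigl[V_\omega(x_k^*,x_{k-1})-V_\omega(\tilde{x}_k,x_{k-1})-V_\omega(x_k^*,\tilde{x}_k)\bigr].
\]
Adding these, the $\nabla g_k(x_{k-1})$ inner products cancel and the remaining cross term reduces to $\langle e_k,x_k^*-\tilde{x}_k\rangle$, which by boundedness of $\Omega$ is $O(\|e_k\|)$ and sums to $O(E_T)$. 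The step-size condition $\lambda\le 2\sigma_\omega/L$ together with the strong convexity of $\omega$ is used to absorb $\tfrac{L}{2}\|\tilde{x}_k-x_{k-1}\|^2$ into $\tfrac{1}{\lambda}V_\omega(\tilde{x}_k,x_{k-1})$, leaving only the Bregman pair $\tfrac{1}{\lambda}[V_\omega(x_k^*,x_{k-1})-V_\omega(x_k^*,\tilde{x}_k)]$ to be handled.

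The remaining step is the almost-telescoping of $\sum_k [V_\omega(x_k^*,x_{k-1})-V_\omega(x_k^*,\tilde{x}_k)]$. I first replace $V_\omega(x_k^*,\tilde{x}_k)$ by $V_\omega(x_k^*,x_k)$, using $\|x_k-\tilde{x}_k\|\le\epsilon_k$, the $G_\omega$-Lipschitz property of $\nabla\omega$, and the diameter of $\Omega$ to bound the discrepancy by a constant multiple of $\epsilon_k$, contributing an additional $O(P_T)$. A reindexing then leaves
\[
V_\omega(x_1^*,x_0)+\sum_{k=1}^{T-1}\bigl[V_\omega(x_{k+1}^*,x_k)-V_\omega(x_k^*,x_k)\bigr]-V_\omega(x_T^*,x_T),
\]
and each inner difference is expanded via the definition of $V_\omega$ and the Lipschitz gradient of $\omega$ to
$\langle \nabla\omega(x_k^*)-\nabla\omega(x_k),x_{k+1}^*-x_k^*\rangle+O(\|x_{k+1}^*-x_k^*\|^2)$, which together with $\|\nabla\omega(x_k^*)-\nabla\omega(x_k)\|\le G_\omega D$ from the boundedness of $\Omega$ produces an $O(\|x_{k+1}^*-x_k^*\|+\|x_{k+1}^*-x_k^*\|^2)$ bound. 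Summing yields $O(1+\Sigma_T+\bar{\Sigma}_T)$, with $V_\omega(x_1^*,x_0)$ supplying the $O(1)$.

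The step I expect to be the main obstacle is precisely this Bregman near-telescope coupled with the inexact substitution $\tilde{x}_k\leftrightarrow x_k$: each piece individually is straightforward, but the accounting is delicate since a na\"ive bound of the reindexed cross term would leak an unwanted $O(T)$ residue (the very defect visible in \eqref{eq-1-10}). The decisive moves are to use boundedness of $\Omega$ to collapse $\nabla\omega$-differences into diameter factors, and to keep track of when $\|x_{k+1}^*-x_k^*\|$ enters linearly versus quadratically so that only $\Sigma_T$ and $\bar{\Sigma}_T$ appear. Collecting the four contributions $O(E_T)$, $O(P_T)$, $O(1+\Sigma_T+\bar{\Sigma}_T)$, and the initial Bregman term gives $R_T=O(1+\Sigma_T+\bar{\Sigma}_T+P_T+E_T)$ as claimed.
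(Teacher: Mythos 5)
Your proposal is correct and follows essentially the same route as the paper: combine the descent lemma and convexity of $g_k$ with the optimality (three-point) inequality for the proximal step, absorb $\tfrac{L}{2}\|\cdot\|^2$ into $\tfrac{1}{\lambda}V_\omega(\cdot,x_{k-1})$ via the step-size condition, near-telescope $V_\omega(x_k^*,x_{k-1})-V_\omega(x_k^*,x_k)$ with the drift bounded by $G_\omega R\,\|x_{k+1}^*-x_k^*\|+O(\|x_{k+1}^*-x_k^*\|^2)$, and charge all inexactness to $O(\epsilon_k)$ terms using the diameter of $\Omega$. The only difference is organizational — you split off $f_k(x_k)-f_k(\tilde{x}_k)$ up front and run the main estimate at the exact minimizer, whereas the paper keeps the inexact iterate throughout and handles the discrepancy inside its Lemma \ref{lemma} — but the ingredients and the resulting bound $O(1+\Sigma_T+\bar{\Sigma}_T+P_T+E_T)$ are the same.
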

\noindent It is worth mentioning that the result of Theorem \ref{thm-1-1} does not involve a $O(T)$ term. If the terms $\Sigma_T$, $\bar{\Sigma}_T$, $P_T$, and $E_T$ has $o(T)$ growth rates, then $R_T/T$ could become small enough when $T$ is sufficiently large. Therefore, this result provides a sharper bound on the dynamic regret compared to the previous one in \cite{ASD}. However, it is still limited to bounded domains. In the following theorem, we achieve an estimate on the dynamic regret when the domain $\Omega$ is the whole space $\mathbb{R}^n$.

\begin{theorem}\label{thm-1-2} (Whole domain) Assume that $\Omega = \mathbb{R}^n$ and the step-size satisfies $\lambda\leq\frac{2\sigma_\omega}{L}$. Then, for the sequence $\{x_k\}_{k=1}^T$ generated by the algorithm \eqref{algorithm} with a initial point $x_0\in\Omega$, we have the following dynamic regret bound:
	\begin{align*}
		\begin{split}
		R_T&=\sum_{k=1}^{T}(f_k(x_k)-f_k(x_k^*))
		\\
		&=O\Big (1 + P_T + \overline{\Sigma}_T + (E_T +P_T + \Sigma_T)^2\Big).
		\end{split}
	\end{align*}
\end{theorem}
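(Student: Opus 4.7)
The plan is to combine the per-iteration bound established in Section~3 with a separate a priori control on $a_k:=\|x_k-x_k^*\|$ that does not rely on a diameter of the ambient domain. The per-iteration bound, obtained from the $\epsilon_k$-approximate optimality of $x_k$, convexity of $h_k$, the descent lemma for $g_k$, and the three-point identity for the Bregman divergence $V_\omega$, has the schematic form
\begin{equation*}
f_k(x_k)-f_k(x_k^*)\;\le\; \tfrac{1}{\lambda}\bigl[V_\omega(x_k^*,x_{k-1})-V_\omega(x_k^*,x_k)\bigr] \;+\; \|e_k\|\,a_k \;+\; C(1+a_k)\,\epsilon_k + C\epsilon_k^2,
\end{equation*}
where the step-size condition $\lambda\le 2\sigma_\omega/L$ is used to handle the smoothness remainder $\tfrac{L}{2}\|x_k-x_{k-1}\|^2$ against the Bregman lower bound $V_\omega(x_k,x_{k-1})\ge \tfrac{\sigma_\omega}{2}\|x_k-x_{k-1}\|^2$.

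Summing from $k=1$ to $T$ and rewriting the telescoping Bregman sum as
\begin{equation*}
\sum_{k=1}^T\bigl[V_\omega(x_k^*,x_{k-1})-V_\omega(x_k^*,x_k)\bigr] \;=\; V_\omega(x_1^*,x_0)-V_\omega(x_T^*,x_T)+\sum_{k=2}^{T}\bigl[V_\omega(x_k^*,x_{k-1})-V_\omega(x_{k-1}^*,x_{k-1})\bigr],
\end{equation*}
I would bound each drift by using the definition \eqref{eq-1-20} and the $G_\omega$-Lipschitzness of $\nabla\omega$:
\begin{equation*}
V_\omega(x_k^*,x_{k-1})-V_\omega(x_{k-1}^*,x_{k-1}) \;\le\; G_\omega\,\|x_{k-1}-x_{k-1}^*\|\,\|x_k^*-x_{k-1}^*\|+\tfrac{G_\omega}{2}\|x_k^*-x_{k-1}^*\|^2.
\end{equation*}
Setting $M_T:=\max_{0\le k\le T}a_k$, this produces a master inequality of the form
\begin{equation*}
R_T\;\le\; C+C\bar\Sigma_T+CP_T+CM_T\bigl(E_T+P_T+\Sigma_T\bigr).
\end{equation*}
In Theorem~\ref{thm-1-1} the diameter of $\Omega$ supplied an absolute bound on $M_T$; in the present whole-space setting a self-contained bound for $M_T$ is needed.

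The crux of the proof is therefore the a priori estimate
\begin{equation*}
M_T \;\le\; C_0 + C_1\bigl(E_T+P_T+\Sigma_T\bigr).
\end{equation*}
To derive it, I would return to the per-iteration bound with $y=x_k^*$, drop the nonnegative term $f_k(x_k)-f_k(x_k^*)$, and, after invoking the sandwich $\tfrac{\sigma_\omega}{2}\|\cdot\|^2\le V_\omega \le \tfrac{G_\omega}{2}\|\cdot\|^2$, extract a controlled recursion by tracking the Lyapunov quantity $V_\omega(x_k^*,x_k)$. The shift $V_\omega(x_k^*,x_{k-1})-V_\omega(x_{k-1}^*,x_{k-1})$ is again handled by the three-point identity, while the cross terms $\|e_k\|a_k$ and $\epsilon_k a_k$ are split via Young's inequality so that the contribution proportional to $a_k$ is absorbed by the current Lyapunov value and only an additive forcing in $\|x_k^*-x_{k-1}^*\|+\|e_k\|+\epsilon_k$ remains. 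Iterating this recursion over $k$ yields the desired bound on $M_T$.

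Substituting this bound into the master inequality and using the elementary estimate $M_T(E_T+P_T+\Sigma_T)\le C+2(E_T+P_T+\Sigma_T)^2$, we obtain
\begin{equation*}
R_T \;=\; O\bigl(1+P_T+\bar\Sigma_T+(E_T+P_T+\Sigma_T)^2\bigr),
\end{equation*}
as claimed. The main obstacle is producing the additive recursion for $a_k$ in the third step: naively combining the sandwich bounds yields $a_k^2\le \tfrac{G_\omega}{\sigma_\omega}a_{k-1}^2+\cdots$, whose iteration explodes since in general $G_\omega/\sigma_\omega\ge 1$. Circumventing this by working at the level of the Bregman Lyapunov value $V_\omega(x_k^*,x_k)$ and converting the ratio $G_\omega/\sigma_\omega$ into an additive Young correction is the technical ingredient that the unbounded case demands beyond what was needed in Theorem~\ref{thm-1-1}.
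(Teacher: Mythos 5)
Your overall architecture coincides with the paper's: derive the per-iteration/master inequality, obtain a separate a priori bound on $M_T=\max_k\|x_k-x_k^*\|$ from the same inequality after dropping the nonnegative regret terms, substitute back, and finish with $ab\le\frac12(a^2+b^2)$. The master inequality and the drift estimate for $V_\omega(x_k^*,x_{k-1})-V_\omega(x_{k-1}^*,x_{k-1})$ are essentially identical to Lemma \ref{lemma}. The gap is in the crux step you yourself identify: the derivation of $M_T\le C_0+C_1(E_T+P_T+\Sigma_T)$. You propose a \emph{per-step} recursion for the Lyapunov value $W_k=V_\omega(x_k^*,x_k)$, absorbing the cross terms $\|e_k\|a_k$ and $\epsilon_k a_k$ into $W_k$ by Young's inequality and then iterating. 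But $a_k\lesssim\sqrt{W_k}$, so absorbing $\beta_k\sqrt{W_k}$ via $\beta_k\sqrt{W_k}\le\eta W_k+\beta_k^2/(4\eta)$ with a fixed $\eta$ yields $W_k\le(1-\eta)^{-1}W_{k-1}+\cdots$, which amplifies geometrically over $T$ steps --- the same explosion you flag for the $G_\omega/\sigma_\omega$ ratio, merely relocated. The alternative per-step route (completing the square in $\sqrt{W_k}$ to get $\sqrt{W_k}\le\sqrt{W_{k-1}}+O(\|e_k\|+\epsilon_k+s_k)+\sqrt{\delta_k}$ with forcing $\delta_k\sim\epsilon_k+s_k^2$) telescopes additively but produces $\sum_k\sqrt{\epsilon_k}$, which is not $O(1+P_T+\Sigma_T)$: it can be of order $\sqrt{TP_T}$, reintroducing a $\sqrt{T}$ that the theorem is specifically designed to avoid. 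So as written, neither reading of your third step delivers the claimed a priori bound.

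The paper's resolution is to keep the inequality in \emph{cumulative} form rather than iterating step by step: dropping the regret terms and using $V_\omega(x_i^*,x_i)\ge\frac{\sigma_\omega}{2}\|x_i^*-x_i\|^2$ gives, for every $i\le T$,
\begin{equation*}
u_i^2\;\le\;S_i+\sum_{k=1}^{i}\tau_k u_k,\qquad u_k=\|x_k-x_k^*\|,\quad S_i=O(1+P_i+\overline{\Sigma}_i),\quad \tau_k=O(\|e_k\|+s_{k+1}+\epsilon_k),
\end{equation*}
and this implicit quadratic inequality is solved globally by the lemma of Schmidt--Roux--Bach (Lemma \ref{lem-2-4}), yielding $u_i\le\sum_{k\le i}\tau_k+S_i^{1/2}$. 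The point is that the forcing enters as $S_T^{1/2}=O\big((1+P_T+\overline{\Sigma}_T)^{1/2}\big)=O(1+P_T+\Sigma_T)$, i.e., $\sqrt{\sum_k\delta_k}$ rather than $\sum_k\sqrt{\delta_k}$, which is exactly what your per-step iteration loses. Your plan can be repaired either by invoking that lemma directly on the cumulative inequality, or by choosing the Young parameters adaptively ($\eta_k\propto\tau_k/\sum_j\tau_j$, so that $\prod_k(1-\eta_k)^{-1}=O(1)$ and $\sum_k\tau_k^2/\eta_k=O\big((\sum_k\tau_k)^2\big)$), but the latter is essentially a rederivation of the same lemma; without one of these devices the a priori bound on $M_T$, and hence the theorem, does not follow from your outline.
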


\noindent The bound obtained in Theorem \ref{thm-1-2} also does not involve $O(T)$. Moreover, the boundedness assumption on domains is dropped. The proofs of the above theorems are given in the next sections.

\section{Technical lemmas}
In this section, we establish two lemmas for proving the main theorems. As well-known, the Bregman divergence $V_w$ defined in \eqref{eq-1-20} satisfies the following identity
\begin{equation}\label{eq-1-1}
	\nabla V_{\omega}(y,x) = \nabla V_{\omega}(y,z) + \nabla V_{\omega}(z,x)
\end{equation}
for all $x,y,z \in \mathbb{R}^d$. Here we use the notation $\nabla V_\omega$ to denote the derivative of $V_\omega$ with respect to the first variable. Also, we have the following identity, which is sometimes called the Pythagorean theorem in the literature:
\begin{equation}\label{eq-2-11}
	\begin{split}
		(z-y)\cdot\nabla V_\omega(y, x)&=(\nabla\omega(x)-\nabla\omega(y))\cdot(y-z)\\
		&=V_\omega(z,x)-V_\omega(z,y)-V_\omega(y,x)
	\end{split}
\end{equation}
for all $x,y,z \in \mathbb{R}^d$.

\begin{lemma} 
	For the sequence $\{x_k\}_{k=1}^T$ generated by the algorithm \eqref{algorithm} with an initial point $x_0\in\Omega$, we have the following estimates:
	\begin{enumerate}
		\item If $\Omega =\mathbb{R}^n$, then for any $z\in\mathbb{R}^n$,
		\begin{align}\label{c}
			\begin{split}
				g_k(x_k)+h_k(y_k)&\leq g_k(z)+h_k(z)+\frac{L_k}{2}\norm{x_k-x_{k-1}}^2+ e_k\cdot(z-x_k)\\
				&\quad+\nabla h_k(y_k)\cdot (y_k-x_k)-\frac{1}{\lambda}( x_k-z)\cdot V_\omega(y_k,x_k)\\
				&\quad-\frac{1}{\lambda}( x_k-z)\cdot V_\omega(x_k,x_{k-1}),
			\end{split}
		\end{align}
		where $y_k=\underset{x\in\Omega}{\arg\min}\,\Phi_{\lambda h_k}(x)$ and $\nabla h_k (y_k) \in \partial h_k (y_k)$.
		\item If $\Omega$ is bounded convex domain, then for any $z \in \Omega$, 
		\begin{align}\label{c-2}
			\begin{split}
				g_k(x_k)+h_k(y_k)&\leq g_k(z)+h_k(z)+\frac{L_k}{2}\norm{x_k-x_{k-1}}^2+e_k\cdot(z-x_k)\\
				&\quad+(\nabla h_k(y_k)+Q_k)\cdot (y_k-x_k)-\frac{1}{\lambda}(x_k-z)\cdot \nabla V_\omega(y_k,x_k)\\
				&\quad-\frac{1}{\lambda}(x_k-z)\cdot\nabla V_\omega(x_k,x_{k-1}),
			\end{split}
		\end{align}
		where $Q_k = \nabla h_k (y_k) + \nabla g_k (x_{k-1}) + e_k + \frac{1}{\lambda} \nabla V_w (y_k, x_{k-1})$.
	\end{enumerate}
\end{lemma}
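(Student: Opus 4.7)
The plan is to derive a single master inequality by combining three ingredients—the descent lemma from $L_k$-smoothness of $g_k$, convexity of $g_k$ and $h_k$, and the optimality characterization of $y_k$ for the proximal subproblem—and then to split the resulting Bregman term via the Pythagorean identity \eqref{eq-1-1}. Both cases share the first three steps; they diverge only in how the optimality of $y_k$ is exploited.

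First, I would apply $L_k$-smoothness at $x_{k-1}$, namely
\begin{equation*}
g_k(x_k) \le g_k(x_{k-1}) + \nabla g_k(x_{k-1}) \cdot (x_k - x_{k-1}) + \tfrac{L_k}{2}\|x_k - x_{k-1}\|^2,
\end{equation*}
and combine it with the convexity bound $g_k(x_{k-1}) \le g_k(z) + \nabla g_k(x_{k-1}) \cdot (x_{k-1} - z)$ to eliminate $g_k(x_{k-1})$, obtaining $g_k(x_k) \le g_k(z) + \nabla g_k(x_{k-1})\cdot(x_k - z) + \tfrac{L_k}{2}\|x_k - x_{k-1}\|^2$. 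Adding the subgradient inequality $h_k(y_k) \le h_k(z) + \nabla h_k(y_k)\cdot(y_k - z)$ and rewriting $\nabla h_k(y_k)\cdot(y_k - z) = \nabla h_k(y_k)\cdot(y_k - x_k) + \nabla h_k(y_k)\cdot(x_k - z)$ yields a master inequality whose only difficult remaining term is $[\nabla g_k(x_{k-1}) + \nabla h_k(y_k)]\cdot(x_k - z)$.

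For case (1), optimality of $y_k$ over $\mathbb{R}^n$ gives the exact equation $\nabla g_k(x_{k-1}) + e_k + \nabla h_k(y_k) + \tfrac{1}{\lambda}\nabla V_\omega(y_k, x_{k-1}) = 0$ for a particular choice of subgradient $\nabla h_k(y_k) \in \partial h_k(y_k)$, so the difficult term collapses into $-e_k\cdot(x_k - z) - \tfrac{1}{\lambda}\nabla V_\omega(y_k, x_{k-1})\cdot(x_k - z)$, which combined with $\nabla h_k(y_k)\cdot(y_k - x_k)$ matches \eqref{c}. For case (2), the sum $\nabla g_k(x_{k-1}) + \nabla h_k(y_k)$ now equals $Q_k - e_k - \tfrac{1}{\lambda}\nabla V_\omega(y_k, x_{k-1})$ by definition of $Q_k$, and the extra $Q_k\cdot(x_k - z)$ residual is controlled by the variational inequality $Q_k\cdot(z - y_k) \ge 0$ for $z \in \Omega$: splitting $Q_k\cdot(x_k - z) = Q_k\cdot(y_k - z) + Q_k\cdot(x_k - y_k)$ and discarding the first summand via VI leaves precisely the $Q_k\cdot(y_k - x_k)$ contribution that combines with $\nabla h_k(y_k)\cdot(y_k - x_k)$ in \eqref{c-2}.

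Both cases conclude by applying the Pythagorean identity \eqref{eq-1-1} to decompose $\nabla V_\omega(y_k, x_{k-1}) = \nabla V_\omega(y_k, x_k) + \nabla V_\omega(x_k, x_{k-1})$, producing the two displayed Bregman terms. The main obstacle I expect is bookkeeping the subgradient $\nabla h_k(y_k)$: it enters independently from convexity of $h_k$ and from the optimality condition at $y_k$, and the two occurrences must refer to the same element of $\partial h_k(y_k)$ so that the cancellations leading to the compact form $\nabla h_k(y_k)\cdot(y_k - x_k)$ actually go through. In the bounded case, extra care is needed in applying the variational inequality with the correct orientation so that the $Q_k$ residual is assembled with the sign that appears in \eqref{c-2}.
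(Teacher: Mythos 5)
Your proposal is correct and follows essentially the same route as the paper: smoothness plus convexity of $g_k$ to reach $g_k(x_k)\le g_k(z)+\nabla g_k(x_{k-1})\cdot(x_k-z)+\tfrac{L_k}{2}\|x_k-x_{k-1}\|^2$, the optimality characterization of $y_k$ (exact first-order equation on $\mathbb{R}^n$, variational inequality on bounded $\Omega$), convexity of $h_k$ at $y_k$, and the identity \eqref{eq-1-1} to split $\nabla V_\omega(y_k,x_{k-1})$. One minor remark: discarding $Q_k\cdot(y_k-z)\le 0$ actually leaves $Q_k\cdot(x_k-y_k)$ rather than $Q_k\cdot(y_k-x_k)$ — the same sign discrepancy exists between the paper's stated \eqref{c-2} and its own derivation — but this is immaterial since the term is later bounded only by a norm times $\epsilon_k$.
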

\begin{proof}

	Since $g_k$ has a $L_k$-Lipschitz continuous gradient, we have
	\begin{align}\label{gklip}
		g_k(x_k)\leq g_k(x_{k-1})+\nabla g_k(x_{k-1})\cdot (x_k-x_{k-1})+\frac{L_k}{2}\norm{x_k-x_{k-1}}^2,
	\end{align}
	for any $z \in \Omega$.
	On the other hand, the convexity of $g_k$ implies that
	\begin{align}\label{gkconv}
		g_k(x_{k-1})\leq g_k(z)+\nabla g_k(x_{k-1})\cdot(x_{k-1}-z).
	\end{align}
	Summing up (\ref{gklip}) and (\ref{gkconv}), we get
	\begin{align}\label{gk}
		g_k(x_k)\leq g_k(z)+\nabla g_k(x_{k-1})\cdot( x_k-z)+\frac{L_k}{2}\norm{x_k-x_{k-1}}^2.
	\end{align}
	\textbf{(Case 1)}  $\Omega = \mathbb{R}^n$.
	
	Since $y_k=\arg\min_x\Phi_{\lambda h_k}(x)$, there exists $\nabla h_k (y_k) \in \partial h_k (y_k)$ such that
	\begin{align*}
		0= \nabla h_k(y_k)+\nabla g_k(x_{k-1})+e_k+\frac{1}{\lambda}\nabla V_\omega(y_k,x_{k-1}).
	\end{align*}
	Using this in \eqref{gklip}, we get
	\begin{equation}\label{eq-2-1}
		\begin{split}
			g_k(x_k)&\leq g_k(z)+\left(-\nabla h_k(y_k)-e_k-\frac{1}{\lambda}\nabla V_\omega(y_k,x_{k-1})\right)\cdot( x_k-z)+\frac{L_k}{2}\norm{x_k-x_{k-1}}^2\\
			&=g_k(z)+\frac{L_k}{2}\norm{x_k-x_{k-1}}^2-e_k\cdot (x_k-z)\\
			&\qquad -\nabla h_k(y_k)\cdot (x_k-z)-\frac{1}{\lambda} \nabla V_\omega(y_k,x_{k-1})\cdot( x_k-z).
		\end{split}
	\end{equation}
	We use the convexity of $h$ to find
	\begin{align}\label{find}
		\begin{split}
			-\nabla  h_k(y_k)\cdot (x_k-z) &= \nabla h_k(y_k)\cdot(z-y_k) +\nabla h_k(y_k)\cdot(y_k-x_k) \\
			&\leq h_k(z)-h_k(y_k)+\nabla h_k(y_k)\cdot(y_k-x_k),
		\end{split}
	\end{align}
	and apply \eqref{eq-1-1} to find
	\begin{align}\label{find2}
		-\frac{1}{\lambda}(x_k-z)\cdot\nabla V_\omega(y_k,x_{k-1})&=-\frac{1}{\lambda}(x_k-z)\cdot (\nabla V_\omega(y_k,x_k)+\nabla V_\omega(x_k,x_{k-1})).
	\end{align}
	Inserting \eqref{find} and \eqref{find2} into \eqref{eq-2-1} leads to the desired estimate \eqref{c}.
	
	\medskip
	
	\noindent \textbf{(Case 2)} $\Omega$ is a bounded convex domain.

	In this case, for any $x \in \Omega$ we have
	\begin{equation*}
		(z-y_k)\cdot \left(\nabla h_k (y_k) + \nabla g_k (x_{k-1}) + e_k + \frac{1}{\lambda} \nabla V_w (y_k, x_{k-1})\right) \geq 0.
	\end{equation*}
	We writie this as
	\begin{equation*}
		(z-x_k)\cdot\left(\nabla h_k (y_k) + \nabla g_k (x_{k-1}) + e_k + \frac{1}{\lambda} \nabla V_w (y_k, x_{k-1})\right) + (x_k -y_k)\cdot Q_k\geq 0,
	\end{equation*}
	where $Q_k = \nabla h_k (y_k) + \nabla g_k (x_{k-1}) + e_k + \frac{1}{\lambda} \nabla V_w (y_k, x_{k-1})$.
	Using this in \eqref{gk}, we get
	\begin{equation*} 
		\begin{split}
			g_k(x_k)&\leq g_k(z)+\left( -\nabla h_k(y_k)-e_k-\frac{1}{\lambda}\nabla V_\omega(y_k,x_{k-1})\right) \cdot(x_k-z)\\
			&\qquad+(x_k -y_k)\cdot Q_k+\frac{L_k}{2}\norm{x_k-x_{k-1}}^2.
		\end{split}
	\end{equation*}
	Then, applying the same estimates \eqref{find} and \eqref{find2}, the desired estimate follows.
\end{proof} 
\begin{lemma}\label{lemma} Assume that the step-size $\lambda$ satisfies $\lambda\leq \frac{2\sigma_\omega}{L}$. For the sequence $\{x_k\}_{k=1}^T$ generated by the algorithm \eqref{algorithm} with an initial point $x_0\in\Omega$, the following inequality holds:
	\begin{align}\label{eq7} 
		\begin{split}
			&\sum_{k=1}^{T}(f_k(x_k)-f_k(x_k^*))+\frac{1}{\lambda}V_\omega(x_T^*,x_T)
			\\
			&\leq\frac{1}{\lambda}V_\omega(x_0^*,x_0)+\frac{G_\omega s_1}{\lambda} \|x_0^*-x_0\|+D\sum_{k=1}^{T}\epsilon_k+\frac{1}{\lambda}\bigg( G_\omega-\frac{\sigma_\omega}{2}\bigg)\sum_{k=1}^{T}s_k^2 \\
			&\quad+\sum_{k=1}^{T}\bigg(e_k+\frac{G_\omega s_{k+1}}{\lambda}+\frac{G_\omega\epsilon_k}{\lambda}\bigg)\norm{x_k-x_k^*},
		\end{split}
	\end{align}
	where the sequence $\{s_k\}_{k=1}^{T+1}$ and the constant $D$ are defined as
	\begin{equation*}
		s_k=\left\{\begin{array}{ll} \|x_k^*-x_{k-1}^*\|& \quad \textrm{if}~  k=1,\dots,T,
			\\
			0 &\quad \textrm{if}~ k=T+1,
		\end{array}\right.
	\end{equation*}
	and 
	\begin{equation*}
		D=\left\{\begin{array}{ll} 2B& \quad \textrm{if}~ \Omega = \mathbb{R}^n
			\\
			\underset{k=1,...,T}{\max} \|2B_k+\nabla g_k (x_{k-1}) + e_k + \frac{1}{\lambda} \nabla V_w (y_k, x_{k-1})\|&\quad \textrm{if}~ \textrm{$\Omega$ is bounded domain}.
		\end{array}\right.
	\end{equation*}
	
\end{lemma}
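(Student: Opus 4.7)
The plan is to apply the per-step inequalities \eqref{c} and \eqref{c-2} with $z = x_k^*$, convert the various $y_k$-terms into $x_k$-terms using the approximation budget $\|y_k - x_k\| \le \epsilon_k$, apply the Pythagorean identity \eqref{eq-2-11} to the Bregman gradient term, and then telescope the resulting Bregman divergences by inserting the intermediate value $V_\omega(x_{k-1}^*,x_{k-1})$ to absorb the drift of the reference point $x_k^*$.

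First I fix $z = x_k^*$ in \eqref{c}/\eqref{c-2}. Lipschitz continuity of $h_k$ gives $h_k(x_k) - h_k(y_k) \le B_k \epsilon_k$, which I add to both sides to convert the left side into $f_k(x_k) = g_k(x_k) + h_k(x_k)$. Since $\nabla h_k(y_k) \in \partial h_k(y_k)$ and $h_k$ is $B_k$-Lipschitz, $\|\nabla h_k(y_k)\| \le B_k$, so $\nabla h_k(y_k) \cdot (y_k - x_k) \le B_k \epsilon_k$; in the bounded case, $Q_k \cdot (y_k - x_k) \le \|Q_k\| \epsilon_k$. Adding these yields the uniform $D\epsilon_k$ contribution. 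Cauchy--Schwarz handles $-e_k \cdot (x_k - x_k^*) \le \|e_k\| \|x_k - x_k^*\|$, giving the $e_k$ factor in the residual sum of \eqref{eq7}.

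For the two Bregman pieces, I bound $-\frac{1}{\lambda}(x_k - x_k^*) \cdot \nabla V_\omega(y_k,x_k)$ by Cauchy--Schwarz together with the Lipschitz gradient of $\omega$: $\|\nabla V_\omega(y_k,x_k)\| = \|\nabla\omega(y_k) - \nabla\omega(x_k)\| \le G_\omega \epsilon_k$, producing the $\frac{G_\omega \epsilon_k}{\lambda}\|x_k - x_k^*\|$ contribution. To $-\frac{1}{\lambda}(x_k - x_k^*)\cdot\nabla V_\omega(x_k, x_{k-1})$ I apply \eqref{eq-2-11} with $(z,y,x) = (x_k^*, x_k, x_{k-1})$, obtaining $\frac{1}{\lambda}\bigl[V_\omega(x_k^*, x_{k-1}) - V_\omega(x_k^*, x_k) - V_\omega(x_k, x_{k-1})\bigr]$. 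Strong convexity gives $V_\omega(x_k, x_{k-1}) \ge \frac{\sigma_\omega}{2}\|x_k - x_{k-1}\|^2$, and the hypothesis $\lambda \le \frac{2\sigma_\omega}{L}$ is used precisely to neutralize the $\frac{L_k}{2}\|x_k - x_{k-1}\|^2$ term against this piece. Summing over $k = 1,\dots,T$ then leaves me with $\sum_{k=1}^T \frac{1}{\lambda}\bigl[V_\omega(x_k^*, x_{k-1}) - V_\omega(x_k^*, x_k)\bigr]$, which does not collapse directly because $x_k^*$ varies. I split it by inserting $V_\omega(x_{k-1}^*, x_{k-1})$: the piece $\sum_k[V_\omega(x_{k-1}^*, x_{k-1}) - V_\omega(x_k^*, x_k)]$ telescopes exactly to $V_\omega(x_0^*, x_0) - V_\omega(x_T^*, x_T)$, giving the $\frac{1}{\lambda}V_\omega(x_0^*,x_0)$ term on the right and moving $\frac{1}{\lambda}V_\omega(x_T^*,x_T)$ to the left. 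The drift part $V_\omega(x_k^*, x_{k-1}) - V_\omega(x_{k-1}^*, x_{k-1}) = \omega(x_k^*) - \omega(x_{k-1}^*) - \nabla\omega(x_{k-1})\cdot(x_k^* - x_{k-1}^*)$ I bound using the Lipschitz gradient of $\omega$ together with Cauchy--Schwarz on the residual $[\nabla\omega(x_{k-1}^*) - \nabla\omega(x_{k-1})]\cdot(x_k^* - x_{k-1}^*)$, yielding $G_\omega \|x_{k-1}^* - x_{k-1}\| s_k + \frac{G_\omega}{2} s_k^2$. A single index shift, using $s_{T+1} = 0$, converts $\sum_{k=1}^T \|x_{k-1}^* - x_{k-1}\| s_k$ into $\|x_0^* - x_0\| s_1 + \sum_{k=1}^T \|x_k - x_k^*\| s_{k+1}$, producing exactly the boundary term $\frac{G_\omega s_1}{\lambda}\|x_0^* - x_0\|$ and the coupling term $\frac{G_\omega s_{k+1}}{\lambda}\|x_k - x_k^*\|$ inside the residual sum.

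The main obstacle will be orchestrating the Bregman telescope against the drifting optimum $x_k^*$: the insertion of $V_\omega(x_{k-1}^*, x_{k-1})$ is what simultaneously produces the $s_k^2$ coefficient and the coupling term $\|x_k - x_k^*\|s_{k+1}$ after reindexing, and it is here that the Lipschitz gradient of $\omega$ is essential. Collecting all terms, the coefficient on $\sum s_k^2$ that falls out of the Lipschitz estimate is $\frac{G_\omega}{2\lambda}$, which since $\sigma_\omega \le G_\omega$ is dominated by the stated $\frac{1}{\lambda}(G_\omega - \frac{\sigma_\omega}{2})$, so the inequality \eqref{eq7} follows.
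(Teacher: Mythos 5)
Your proposal follows essentially the same route as the paper's proof: setting $z=x_k^*$ in \eqref{c}/\eqref{c-2}, applying the three-point identity \eqref{eq-2-11} to $\nabla V_\omega(x_k,x_{k-1})$, inserting $V_\omega(x_{k-1}^*,x_{k-1})$ so that one piece telescopes to $V_\omega(x_0^*,x_0)-V_\omega(x_T^*,x_T)$ while the drift piece is bounded via the $G_\omega$-Lipschitz gradient of $\omega$ and reindexed using $s_{T+1}=0$; your drift coefficient $\frac{G_\omega}{2}$ on $s_k^2$ is in fact slightly sharper than, and dominated by, the stated $G_\omega-\frac{\sigma_\omega}{2}$. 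The only caveat is that with the standard constant $V_\omega(x_k,x_{k-1})\geq\frac{\sigma_\omega}{2}\|x_k-x_{k-1}\|^2$ that you invoke, the hypothesis $\lambda\leq\frac{2\sigma_\omega}{L}$ leaves a residual $\big(\frac{L_k}{2}-\frac{\sigma_\omega}{2\lambda}\big)\|x_k-x_{k-1}\|^2$ that is only guaranteed nonpositive for $\lambda\leq\frac{\sigma_\omega}{L}$; the paper closes this by asserting $V_\omega(x_k,x_{k-1})\geq\sigma_\omega\|x_k-x_{k-1}\|^2$, so this is a shared factor-of-two bookkeeping issue rather than a flaw specific to your argument.
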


\begin{proof}  We only prove the lemma for the case $\Omega = \mathbb{R}^n$ since the same proof directly applies to the bounded domain case if we use \eqref{c-2} instead of \eqref{c}.

	We put $z=x_k^*$ in (\ref{c}) to find
	\begin{align}\label{eq5}
		\begin{split}
			g_k(x_k)+h_k(y_k)&\leq g_k(x_k^*)+h_k(x_k^*)+\frac{L_k}{2}\norm{x_k-x_{k-1}}^2 + e_k\cdot (x_k^* -x_k)\\
			&\quad+\nabla h_k(y_k)\cdot(y_k-x_k)-\frac{1}{\lambda}(x_k-x_k^*)\cdot\nabla V_\omega(y_k,x_k) \\
			&\quad-\frac{1}{\lambda} (x_k-x_k^*)\cdot\nabla V_\omega(x_k,x_{k-1}).
		\end{split}
	\end{align}
	We employ the identity \eqref{eq-2-11} to find
	\begin{align*}
		\begin{split}
		&(x_k^*-x_k)\cdot\nabla V_\omega(x_k,x_{k-1})\\
		&  =V_\omega(x_k^*,x_{k-1})-V_\omega(x_k^*,x_k)-V_\omega(x_k,x_{k-1})\\
		&=(V_\omega(x_{k-1}^*,x_{k-1})-V_\omega(x_k^*,x_k))+(V_\omega(x_k^*,x_{k-1})-V_\omega(x_{k-1}^*,x_{k-1}))-V_\omega(x_k,x_{k-1}).
		\end{split}
	\end{align*}
	Putting this into \eqref{eq5}, we get
	\begin{align}\label{eq6}
		\begin{split}
			&g_k(x_k)+h_k(y_k)\\
			&\leq f_k(x_k^*) + (y_k-x_k)\cdot \nabla h_k(y_k)+\frac{1}{\lambda}(V_\omega(x_{k-1}^*,x_{k-1})-V_\omega(x_k^*,x_k))+e_k\cdot (x_k-x_k^*)\\
			&\quad -\frac{1}{\lambda}( x_k-x_k^*)\cdot\nabla V_\omega(y_k,x_k)+\frac{L_k}{2}\norm{x_k-x_{k-1}}^2-\frac{1}{\lambda}V_\omega(x_k,x_{k-1})\\&\quad+\underbracket{\frac{1}{\lambda}\bigg(V_\omega(x_k^*,x_{k-1})-V_\omega(x_{k-1}^*,x_{k-1})\bigg)}_{A}.
		\end{split}
	\end{align}
	Using the $\sigma_\omega$-strongly convexity of $\omega$ and $G_\omega$-lipschitz continuity of $\nabla\omega$, we estimate $A$ as follows:
	\begin{align*}
		V_\omega(x_k^*,x_{k-1})-V_\omega(x_{k-1}^*,x_{k-1})&=-V_\omega(x_{k-1}^*,x_k^*)+(\nabla w(x_k^*)-\nabla\omega(x_{k-1}))\cdot (x_k^*-x_{k-1}^*)\\&
		\leq -\frac{\sigma_\omega}{2}\norm{x_{k-1}^*-x_k^*}^2+(\nabla w(x_{k-1}^*)-\nabla\omega(x_{k-1}))\cdot( x_k^*-x_{k-1}^*)\\
		&\quad +(\nabla w(x_k^*)-\nabla\omega(x_{k-1}^*))\cdot(x_k^*-x_{k-1}^*)\\&
		\leq \bigg( G_\omega-\frac{\sigma_\omega}{2}\bigg)\norm{x_{k-1}^*-x_k^*}^2+G_\omega\norm{x_{k-1}^*-x_{k-1}}\norm{x_{k-1}^*-x_k^*}.
	\end{align*}
	Using \eqref{inexact} and Assumption 1, we also have
	\begin{equation*}
		(y_k -x_k)\cdot\nabla h_k (y_k)\leq B\epsilon_k.
	\end{equation*}
	Inserting these estimates and $V_w (x_{k}, x_{k-1}) \geq \sigma_w \|x_k -x_{k-1}\|^2$ in \eqref{eq6}, we deduce
	\begin{equation}\label{eq-2-3}
		\begin{split}
			&g_k(x_k)+h_k(y_k)\\&\leq f_k(x_k^*) + B\epsilon_k  +\frac{1}{\lambda}\bigg(V_\omega(x_{k-1}^*,x_{k-1})-V_\omega(x_k^*,x_k)\bigg)+\|e_k\|\norm{x_k-x_k^*}\\&
			\quad+\frac{G_\omega}{\lambda}\norm{x_k-x_k^*}\epsilon_k+\bigg(\frac{L_k}{2}-\frac{\sigma_\omega}{\lambda}\bigg)\norm{x_k-x_{k-1}}^2\\&
			\quad+ \frac{1}{\lambda}\bigg( G_\omega-\frac{\sigma_\omega}{2}\bigg)\norm{x_{k-1}^*-x_k^*}^2+\frac{1}{\lambda}G_\omega\norm{x_{k-1}^*-x_{k-1}}\norm{x_{k-1}^*-x_k^*}.
		\end{split}
	\end{equation}
	Now we use the Lipschitz continuity of $h_k$ to find
	\begin{align*}
		h_k(y_k)=h_k(x_k)+h_k(y_k)-h_k(x_k)\geq h_k(x_k)-B\norm{y_k-x_k}\geq h_k(x_k)-B\epsilon_k.
	\end{align*} 
	Putting this in \eqref{eq-2-3} and using that $\lambda\leq \frac{2\sigma_\omega}{L_k}$, we have
	\begin{align*}
		f_k(x_k)&=g_k(x_k)+h_k(x_k)\\
		&\leq f_k(x_k^*) +2 B\epsilon_k+\frac{1}{\lambda}\bigg(V_\omega(x_{k-1}^*,x_{k-1})-V_\omega(x_k^*,x_k)\bigg)+\|e_k\|\norm{x_k-x_k^*}\\
		&\quad+ \frac{1}{\lambda}\bigg( G_\omega-\frac{\sigma_\omega}{2}\bigg)\|x_{k-1}^*-x_{k}^*\|^2+\frac{G_\omega}{\lambda}\norm{x_k-x_k^*}\epsilon_k\\
		&\quad+\frac{G_\omega}{\lambda} \|x_{k-1}^*-x_{k}^*\|\norm{x_{k-1}^*-x_{k-1}}.
	\end{align*}
	Summing this from $k=1$ to $k=T$ leads to the desired estimate \eqref{eq7}.
\end{proof}

\section{Proofs of main results}\label{proof}
In this section, we prove Theorem \ref{thm-1-1} and Theorem \ref{thm-1-2}. Both proofs are based on the estimate \eqref{eq7} of Lemma \ref{lem-2-4}, but we need to carefully find a bound of the term $\|x_k-x_k^*\|$ for Theorem \ref{thm-1-2} since the domain $\Omega$ is given by the whole space $\mathbb{R}^n$. 

\subsection{Proof of Theorem \ref{thm-1-1}} We recall the inequality \eqref{eq7} of Lemma \ref{lemma}:
	\begin{align*}
		\begin{split}
			&\sum_{k=1}^{T}(f_k(x_k)-f_k(x_k^*))+\frac{1}{\lambda}V_\omega(x_T^*,x_T)
			\\
			&\leq\frac{1}{\lambda}V_\omega(x_0^*,x_0)+\frac{G_\omega s_1}{\lambda}\|x_0^*-x_0\|+D\sum_{k=1}^{T}\epsilon_k+\frac{1}{\lambda}\bigg( G_\omega-\frac{\sigma_\omega}{2}\bigg)\sum_{k=1}^{T}s_k^2 \\
			&\quad+\sum_{k=1}^{T}\bigg(e_k+\frac{G_\omega s_{k+1}}{\lambda}+\frac{G_\omega\epsilon_k}{\lambda}\bigg)\norm{x_k-x_k^*}.
		\end{split}
	\end{align*}
	Here, the term $\frac{1}{\lambda}V_\omega(x_T^*,x_T)$ in the left hand side is strictly positive due to the strongly convexity of $\omega$. Then, if we denote $R$ by the diameter of $\Omega$, then we have that $\|x_k-x_k^*\|\leq R$ for all $k\leq T$. Combining these facts, we derive the following estimate,
	\begin{align*}
		\begin{split}
			\sum_{k=1}^{T}(f_k(x_k)-f_k(x_k^*))
			&\leq\frac{1}{\lambda}V_\omega(x_0^*,x_0)+\frac{RG_\omega s_1}{\lambda}+D\sum_{k=1}^{T}\epsilon_k+\bigg( G_\omega-\frac{\sigma_\omega}{2}\bigg)\sum_{k=1}^{T}s_k^2 \\
			&\quad+R\sum_{k=1}^{T}\bigg(e_k+G_\omega s_{k+1}+\frac{G_\omega\epsilon_k}{\lambda}\bigg).
		\end{split}
	\end{align*}
	Rearranging the index of summation for $s_k$, we obtain the desired estimate. \qed \vspace{10pt}\\ 

 Now we turn to prove Theorem \ref{thm-1-2}, where the domain $\Omega$ is the whole space $\mathbb{R}^n$. For this case, it is a nontrivial issue to obtain a reasonable bound for $\|x_k-x_k^*\|$. To bound the term $\|x_k-x_k^*\|$ in a recursive way, we will make use of the following lemma:
 \begin{lemma}[\cite{SRB}]\label{lem-2-4} Assume that the non-negative sequence $\{u_k\}$ satisfies the following recursion for all $i\geq1$:\begin{align*}
 		u_i^2\leq S_i+\sum_{k=1}^{i}\tau_ku_k.
 	\end{align*}
 	with $\{S_k\}$ an increasing sequence, $S_0\geq u_0^2$ and $\tau_i\geq0$ for all $i$. Then, for all $i\geq1$, then\begin{align*}
 		u_i\leq\frac{1}{2}\sum_{k=1}^{i}\tau_k+\bigg(S_i+\bigg(\frac{1}{2}\sum_{k=1}^{i}\tau_k\bigg)^2\bigg)^{1/2}.
 	\end{align*}
 \end{lemma}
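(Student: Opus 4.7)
The plan is to prove the lemma by strong induction on $i$ after recognizing that the claimed upper bound is, by design, the larger root of a natural quadratic. I set $T_i := \frac{1}{2}\sum_{k=1}^i \tau_k$ and $v_i := T_i + \sqrt{S_i + T_i^2}$, which is exactly the positive root of $x^2 - 2T_i x - S_i = 0$; in particular
\[
v_i^2 = S_i + 2T_i v_i.
\]
Because $\tau_i \ge 0$ and $\{S_i\}$ is non-decreasing and non-negative (the latter since $S_0\ge u_0^2\ge 0$), both $T_i$ and $S_i + T_i^2$ are non-decreasing in $i$, so $v_i$ itself is monotone non-decreasing. The entire proof will then reduce to showing $u_i \leq v_i$ for every $i \geq 1$, since $v_i$ is literally the right-hand side of the claim.

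For the base case $i=1$, the hypothesis reads $u_1^2 - \tau_1 u_1 - S_1 \le 0$, and the quadratic formula applied to the non-negative $u_1$ gives $u_1 \leq \tfrac{1}{2}(\tau_1 + \sqrt{\tau_1^2 + 4S_1}) = v_1$. For the inductive step, assume $u_k \leq v_k$ for all $k \le i-1$. I would isolate the $k=i$ term on the right-hand side of the recursion and use the induction hypothesis together with monotonicity $v_k \leq v_{i-1}\leq v_i$ to obtain
\[
u_i^2 - \tau_i u_i \le S_i + \sum_{k=1}^{i-1}\tau_k u_k \le S_i + v_i\Bigl(\sum_{k=1}^{i-1}\tau_k\Bigr) = S_i + v_i(2T_i - \tau_i) = v_i^2 - \tau_i v_i,
\]
where the last equality uses the defining identity $v_i^2 = S_i + 2T_i v_i$. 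Rearranging and factoring this collapses to the single sign condition
\[
(u_i - v_i)(u_i + v_i - \tau_i) \le 0.
\]

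The main obstacle I expect is extracting $u_i \leq v_i$ from this sign inequality, since a priori either factor could control the sign. The argument splits into two cases: either $u_i \le v_i$, which is already the conclusion, or $u_i > v_i$ and the second factor must be non-positive, i.e.\ $u_i + v_i \le \tau_i$. To dispose of the second case I would observe that $v_i \ge T_i \ge \tau_i/2$, so $u_i \le \tau_i - v_i \le \tau_i/2 \le v_i$, contradicting $u_i > v_i$. Hence $u_i \le v_i$ in all situations, the induction closes, and the stated bound follows by unfolding the definition of $v_i$.
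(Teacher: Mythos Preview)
Your proof is correct. The paper itself does not supply a proof of this lemma; it simply cites it from \cite{SRB} and uses it as a black box in the proof of Theorem~\ref{thm-1-2}. Your argument---strong induction on $i$, recognizing the claimed bound $v_i = T_i + \sqrt{S_i + T_i^2}$ as the positive root of $x^2 - 2T_i x - S_i = 0$, exploiting the monotonicity of $v_i$, and closing the induction via the factorization $(u_i - v_i)(u_i + v_i - \tau_i)\le 0$---is essentially the standard proof from the original reference, so there is nothing to compare against in the present paper.
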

 
 \subsection{Proof of Theorem \ref{thm-1-2}} 
 By the estimate \eqref{eq7}, we have for $i\geq 1$ the following inequality
 \begin{align*}
 	\begin{split}
 		&\sum_{k=1}^{i}(f_k(x_k)-f_k(x_k^*))+\frac{1}{\lambda}V_\omega(x_i^*,x_i)\\
 		&\leq\frac{1}{\lambda}V_\omega(x_0^*,x_0)+\frac{G_\omega s_1}{\lambda}\|x_0^*-x_0\|+D\sum_{k=1}^{i}\epsilon_k+\frac{1}{\lambda}\bigg( G_\omega-\frac{\sigma_\omega}{2}\bigg)\sum_{k=1}^{i}s_k^2 \\
 		&\quad+\sum_{k=1}^{i}\bigg(e_k+\frac{G_\omega s_{k+1}}{\lambda}+\frac{G_\omega\epsilon_k}{\lambda}\bigg)\norm{x_k-x_k^*}.
 	\end{split}
 \end{align*} 
 This, together with the fact
 \begin{align*}
 	f_k(x_k)-f_k(x_k^*)\geq 0\quad \forall ~ k \geq 1,
 \end{align*}
 gives that for any positive integer $i$,
 \begin{align}\label{inequality1}
 	\begin{split}
 		\frac{\sigma_\omega}{2\lambda}\norm{x_i^*-x_i}^2&\leq\frac{1}{\lambda}V_\omega(x_i^*,x_i)\\&\leq Z_0+D\sum_{k=1}^{i}\epsilon_k+\frac{1}{\lambda}\bigg( G_\omega-\frac{\sigma_\omega}{2}\bigg)\sum_{k=1}^{i}s_k^2
 		\\&\quad+\sum_{k=1}^{i}\bigg(e_k+\frac{G_\omega s_{k+1}}{\lambda}+\frac{G_\omega\epsilon_k}{\lambda}\bigg)\norm{x_k-x_k^*},
 	\end{split}
 \end{align}
 where we used the strongly convexity of $\omega$ in the first line and the following notation
 \begin{equation*}
 	Z_0 = \frac{1}{\lambda}\Big(V_\omega(x_0^*,x_0)+ G_w s_1 \|x_0^* -x_0\|\Big).
 \end{equation*}
 
\vspace{10pt}
\noindent Now we set the following variables
\begin{align}\label{definitions}
	\begin{split}
	u_i&=\norm{x_i-x_i^*},\\
	S_i&=\frac{2\lambda}{\sigma_\omega}Z_{0} +\frac{2\lambda D}{\sigma_\omega}\sum_{k=1}^{i}\epsilon_k+\bigg( \frac{2 G_\omega}{\sigma_\omega}-1\bigg)\sum_{k=1}^{i}s_k^2,\\
	\tau_i &= \frac{2\lambda}{\sigma_\omega}e_i+\frac{2G_\omega}{\sigma_\omega} s_{i+1}+\frac{2G_\omega}{\sigma_\omega}\epsilon_i,
	\end{split}
\end{align}
and apply Lemma \ref{lem-2-4} on \eqref{inequality1} to obtain that for $i\leq T$,
\begin{equation*}
	\begin{split}
		\|x_i - x_i^* \| &\leq \frac{1}{2} \sum_{k=1}^i \tau_k + \Big( S_i + \Big( \frac{1}{2} \sum_{k=1}^i \tau_k\Big)^2 \Big)^{1/2} 
		\\
		& \leq \sum_{k=1}^i \tau_k + S_i^{1/2},\\
		&  \leq \sum_{k=1}^T \tau_k + S_T^{1/2},
	\end{split}
\end{equation*}
where we used that $(a+b)^{1/2} \leq a^{1/2} + b^{1/2}$ holds for $a, b \geq 0$ in the second inequality.
	By applying this bound to \eqref{eq7}, we get  
	\begin{align}\label{eq10} 
		\begin{split}
			&\sum_{k=1}^{T}(f_k(x_k)-f_k(x_k^*))
			\\
			&\leq Z_0 +D\sum_{k=1}^{T}\epsilon_k+\frac{1}{\lambda}\bigg( G_\omega-\frac{\sigma_\omega}{2}\bigg)\sum_{k=1}^{T}s_k^2 \\
			&\quad+\Big( \sum_{k=1}^T \tau_k + S_T^{1/2}\Big) \sum_{k=0}^{T}\bigg(e_k+\frac{G_\omega s_{k+1}}{\lambda}+\frac{G_\omega\epsilon_k}{\lambda}\bigg).
		\end{split}
	\end{align}	
	Here we recall the following notations
	\begin{equation*}
		\begin{split}
			&\Sigma_T = \sum_{t=2}^T s_t, \quad \overline{\Sigma}_T = \sum_{t=1}^T s_t^2
			\\
			&E_T = \sum_{t=1}^T \mathbb{E}\|e_t\|,\quad P_T = \sum_{t=1}^T \epsilon_t,\quad \overline{P}_T = \sum_{t=1}^T \epsilon_t^2.
		\end{split}
	\end{equation*}
	Then, one may see from \eqref{definitions} that the following estimates hold:
	\begin{align*}
		S_T = O(1+P_T + \overline{\Sigma}_T)\quad \textrm{and}\quad
		\sum_{k=1}^T \tau_k = O(E_T + \Sigma_T + P_T).
	\end{align*} Putting these estimates in \eqref{eq10}, we get
	\begin{align*} 
		\begin{split}
			&\sum_{k=1}^{T}(f_k(x_k)-f_k(x_k^*))
			\\
			& = O(1+P_T + \overline{\Sigma}_T)  +O\Big( ((E_T + \Sigma_T + P_T) +(1+P_T + \overline{\Sigma}_T)^{1/2}) (E_T + \Sigma_T +  P_T)\Big)
			\\
			& = O\Big(1 + P_T + \overline{\Sigma}_T + (E_T +\Sigma_T + P_T)^2\Big),
		\end{split}
	\end{align*}	
	where we used $ab \leq \frac{1}{2}(a^2 + b^2)$ in the last estimate. This completes the proof of \mbox{Theorem \ref{thm-1-2}.} \qed
	
\section{Numerical simulation}
This section provides numerical experiments of the online mirror descent for composite optimization. First we consider a multiple regression problem for time-varying system identification. Next we study the dynamic foreground-background separation problem for video frames. 
\begin{example}
	Consider the following identification problem:
	\begin{align*}
		y_t=a_{1,t}x_{1,t}+a_{2,t}x_{2,t}+...+a_{n,t}x_{n,t}+w_t
	\end{align*}
	where $x_{i,t}\in\mathbb{R}^d$ are observable inputs, $w_t\in\mathbb{R}^d$ is an unobservable random error, $a_{i,t}\in\mathbb{R}$ are sparse coefficients to be estimated, and $y_t\in\mathbb{R}^d$ is the corresponding response variable at time $t$. We generate a problem that the observable input $x_{i,t}\in\mathbb{R}^{2}$ $(i=1,...,30)$ is sampled randomly and the corresponding output $y_{t}\in\mathbb{R}^{2}$ is obtained from a Gauss-Markov model \cite{LL} where $w_t$ is given as $w_t\sim \mathcal{N}(0,0.01\mathbb{I}_{d\times d})$ and the coefficient $a_{i,t}$ is given by 
	\begin{align*}
		a_{i,t}=\begin{cases*}
			\alpha a_{i,t-1}+v_{i,t} \qquad & $i\in\{1,2\}$ \\
			0 &\text{otherwise}
		\end{cases*}
	\end{align*}
	Here $a_{i,0}\sim\mathcal{N}(0,1),\ \alpha=0.999$ and $v_{i,t}\sim\mathcal{N}(0,1-\alpha^2)$ for $i=1,2$. The aim is to estimate unknowns $a_{i,t}$ from the knowledge of $x_{i,t}$ and $y_{i,t}$, by solving the following time-varying optimization problem:
	\begin{align*}
		\min_{\bold{a}\in\mathbb{R}^{30}} \norm{y_t-X_t\bold{a}}^2_2+\eta \norm{\bold{a}}_1, \qquad t\in\mathbb{N},
	\end{align*}
	where $X_t=(x_{1,t},...,x_{30,t})$, $\bold{a}=(a_{1},...,a_{30})$ and the regularizing parameter $\eta=0.05$. For this, we use the online proximal mirror descent method \eqref{algorithm} with the step-size $\lambda=0.01$. Furthermore, to investigate the effects of inexact computations of gradients and proximal parts, we introduce artificial errors in our tests. Specifically, at each step $t$, our algorithm is implemented as follows:
	\begin{align}\label{ex1al}
		\begin{split}
		\bold{a}_{t+1/2}&=\bold{a}_t-\lambda (2(X_t\bold{a}_t-y_{t})\cdot X+e_t)\\
		\bold{a}_{t+1}&=\underset{\bold{a}\in\mathbb{R}^n}{\arg\min}\left\{\eta\|\bold{a}\|_1+\frac{1}{2\lambda}\|\bold{a}-\bold{a}_{t+1/2}\|^2_2\right\}+\epsilon_t.
		\end{split}
	\end{align}
	
	For the comparison, we first test the algorithm when the errors of gradient $e_t$ and mirror descent $\epsilon_t$ are given as identically zeros. In Figure 1, we plot the true values of $\{a_{1,t},a_{2,t},a_{6,t},a_{30,t}\}$ and the values predicted by the above algorithm. Next, we perform the same test with errors $e_t$ and $\ep_t$ in the gradient and proximal operator generated by the normal distribution $\mathcal{N}(0,0.05^2)$. The result is exhibited in Figure 2. Additionally, the dynamic regrets from both tests are presented in Figure 3.
\begin{figure}[hbt!]
	\centering
	\includegraphics[width=13cm]{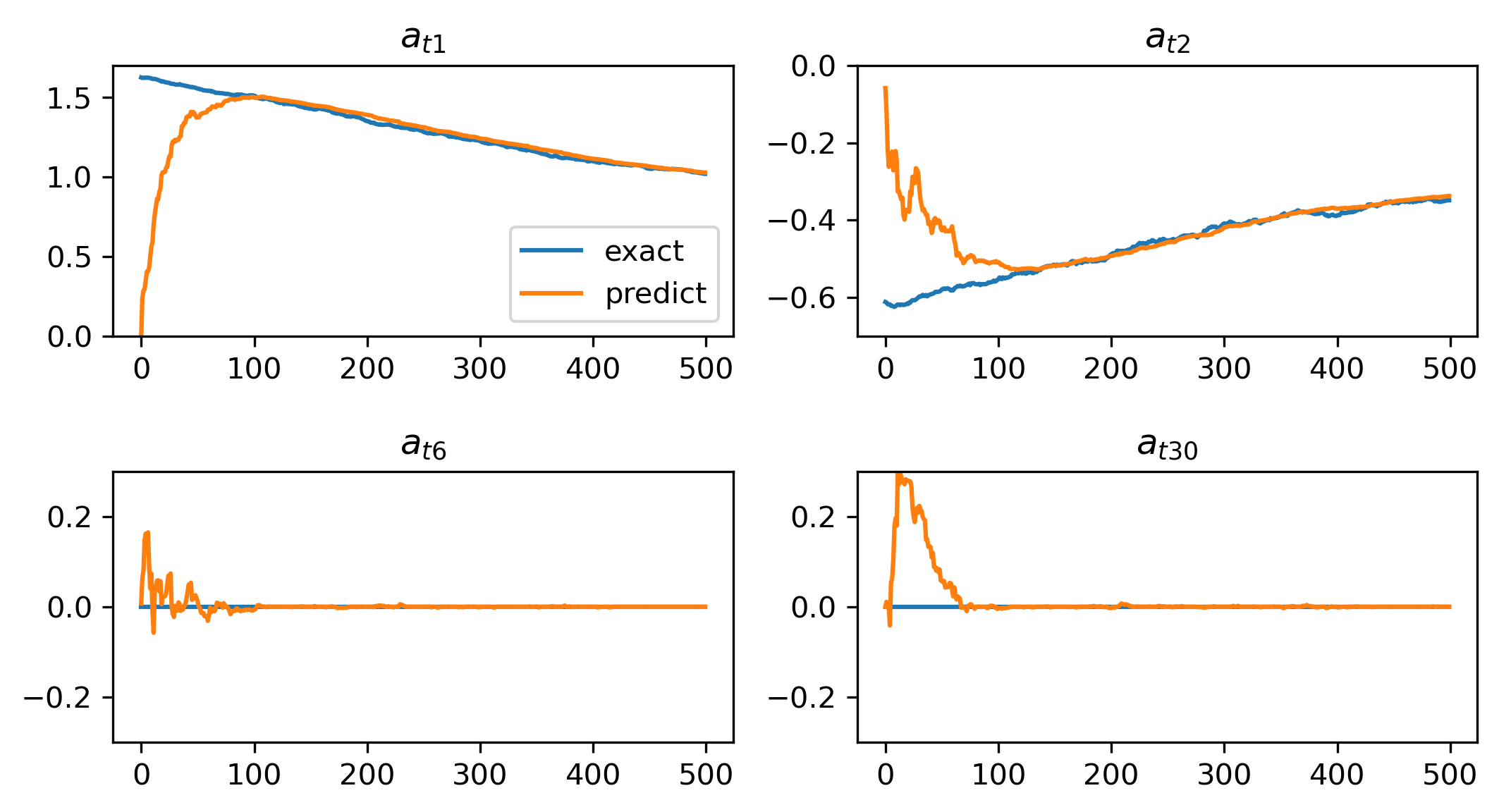}
	\caption{The algorithm \eqref{ex1al} is implemented with exact computation, i.e., $e_k,\epsilon_k\equiv0$. Here, the x-axis represents the number of iterations, and the y-axis represents the values of predicted and exact coefficient $a_{i,t}$. This figure demonstrates the algorithm's performance in finding coefficients for a time-varying linear model. Remarkably, predictions by the algorithm are very close to the exact coefficients after approximately 100 iterations.}
\end{figure}
	\begin{figure}[hbt!]
		\centering
		\includegraphics[width=13cm]{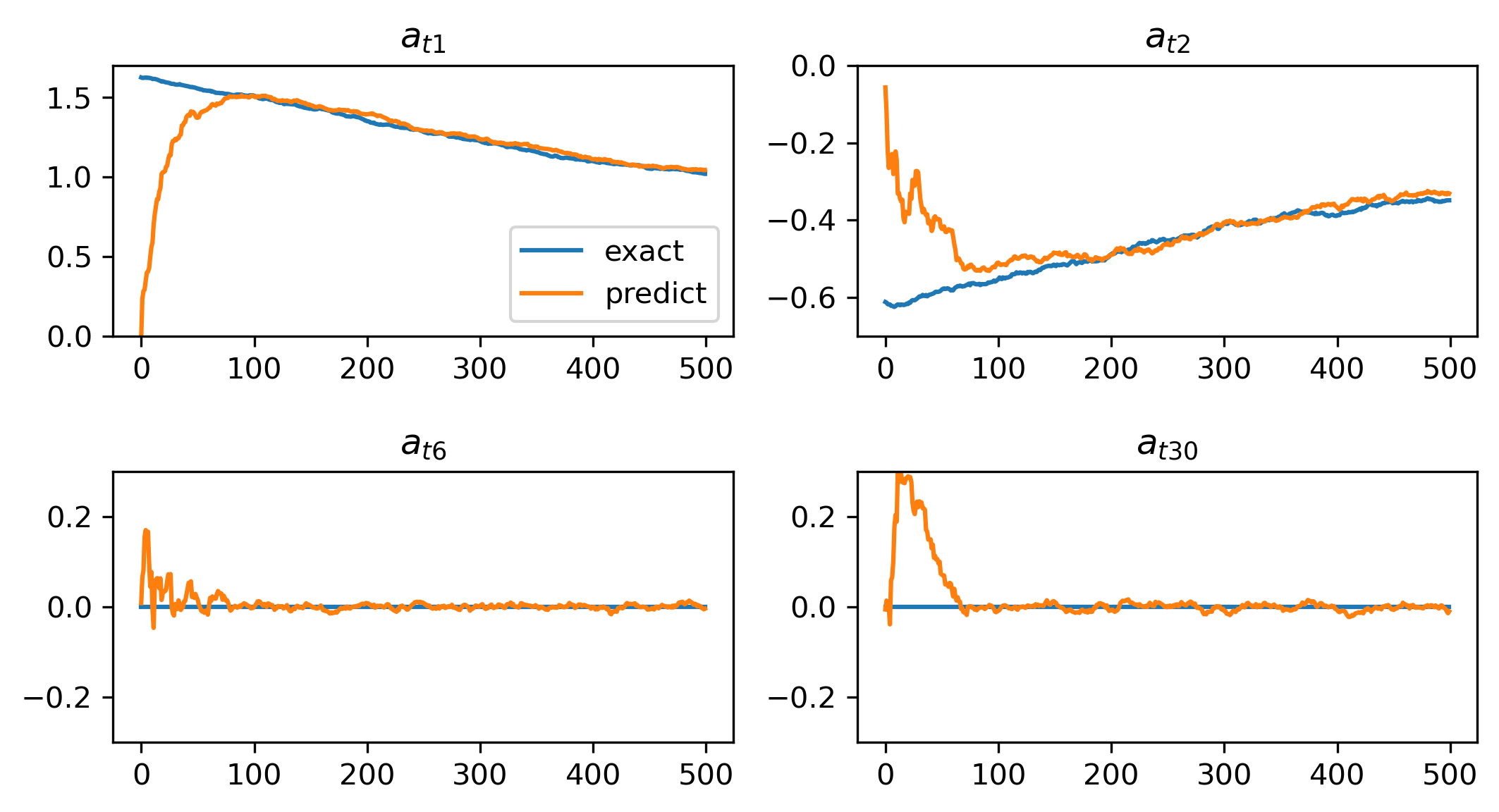}
		\caption{The algorithm \eqref{ex1al} is implemented with inexact computation, i.e., $e_k,\epsilon_k \sim \mathcal{N}(0,0.05^2)$.  Although the performance may not reach the level in the previous one, this figure shows that the algorithm still manages to achieve reasonable performance in the presence of the noise.}
	\end{figure}
\begin{figure}[hbt!]
	\centering
	\includegraphics{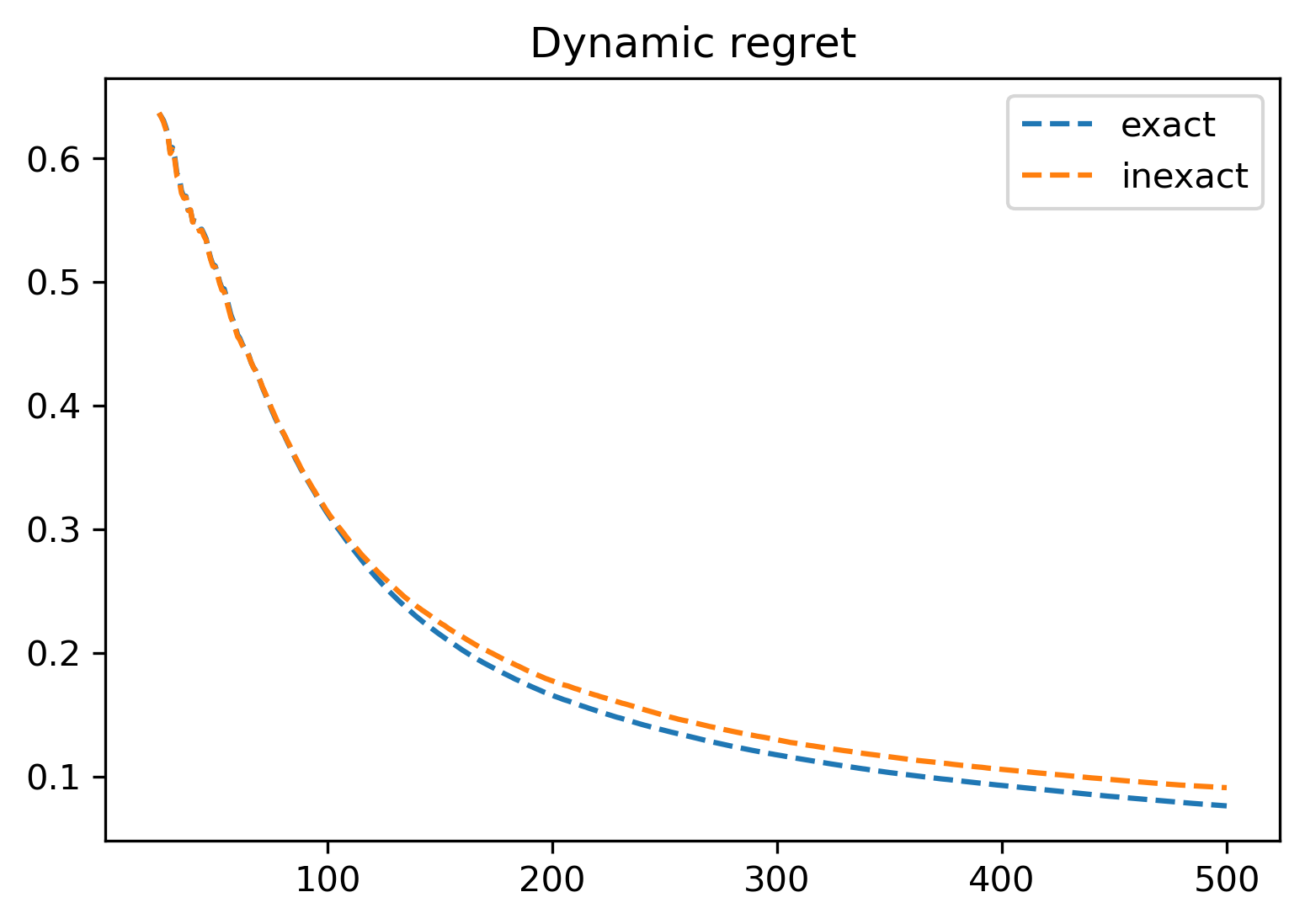}
	\caption{This figure presents a comparison of the dynamic regrets obtained from the two conducted experiments, one without errors and the other with uniformly distributed errors in the gradient and the proximal mirror operator. As the number of steps increases, the difference in dynamic regrets between the two experiments becomes more apparent. However, despite the presence of noise, the dynamic regret obtained by inexact computations still decays at a similar rate.}
\end{figure}
\end{example}
 
\begin{example}
Here we consider the foreground-background seperation problem for video frames, which was considered in \cite{DSR,MBCR}. 
At each moment, we gather a set of $L$ video frames $\{m_k \in \mathbb{R}^r\}_{k=1}^{L}$ into a matrix $M_k \in \mathbb{R}^{L\times r}$. We aim to split $M_k$ into a low-rank matrix $L_k$ representing the background and a sparse matrix $S_k$ representing the foreground as below:
\begin{equation*}
M_k = L_k + S_k.
\end{equation*}
Following the work \cite{DSR,MBCR}, we solve this problem by considering the following online composite optimization problem with streaming data $M_k$:
\begin{align*}
	\underset{L,S\in\mathbb{R}^{r\times L}}{\arg\min} \left\{ \|M_k -L-S\|_F^2 + \mu_{L} \|L\|_F^2 + \mu_s \|S\|_F^2+\lambda_L \|L\|_{*} + \lambda_s \|\textrm{vec}(S)\|_{1}\right\}
\end{align*}
where $\|\cdot\|_F$ denotes the Frobenious norm and $\|\cdot\|_{*}$ is the nuclear norm. In this example, we choose the parameters as $\mu_L =0.005$, $\mu_S =2.0$, $\lambda_L = 10^5$, and $\lambda_S = 0.034$. To solve this problem, we utilize the proximal online gradient descent given as
\begin{equation*}
\begin{split}
Z_{k+1}& = L_k - \alpha_{L} \Big( 2(L_k +S_k -M_k) + 2 \mu_L L_k\Big)
\\
L_{k+1}& = \mathcal{D}_{\alpha_L \lambda_L} (Z_{k+1})
\\
Y_{k+1}& = S_k - \alpha_S \Big(2 (L_k +S_k -M_k) + 2\mu_S S_k\Big)
\\
S_{k+1}& = \mathcal{S}_{\alpha_S \lambda_S} (Y_{k+1}).
\end{split}
\end{equation*}
where we used the step-sizes as $\alpha_L = 0.2$ and $\alpha_S = 0.2$. The proximal operators $\mathcal{D}$ and $\mathcal{S}$ are defined as follow. For a matrix $Z$, admitting the singular value decomposition $Z = U \Sigma V^T$, we set $\mathcal{D}_{\lambda}(Z) = U\mathcal{D}_{\lambda}(\Sigma) V^T$ where $\mathcal{D}_{\lambda}(\Sigma)$ is a diagonal matrix with entries given as $[\mathcal{D}_{\lambda}(\Sigma)]_{ii} = \max \{[\Sigma]_{ii}-\lambda, 0\}$. Next, the proximal operator $\mathcal{S}_{\lambda}(Y)$ for  matrix $Y$ has entries given as
\begin{equation*}
[\mathcal{S}_{\lambda}(Y)]_{ij} = \textrm{sign}([Y]_{ij})(|[Y]_{ij}| - \lambda)_{+}.
\end{equation*}
In our experiment, we utilized video data sourced from \cite{video}. To compute the proximal operator $\mathcal{D}$ effectively, one needs to use an approximate pair $(U,\Sigma,V)$ of the singular value decomposition such that $Z\simeq U\Sigma V$. This approximation brings some errors in the proximal operator. In other words, the computations are performed inexactly. The results of applying the subtraction algorithm to the video are shown in Figures \ref{fig1} and \ref{fig2}. Figure \ref{fig1} effectively demonstrates the algorithm's performance to accurately separate the background and foreground elements of the images. The dynamic regret is given in Figure \ref{fig2}, which illustrates that the dynamic regret decays well over time. These results emphasize the algorithm's adaptability and robustness when tackling real-world challenges involving inexact computations.
\begin{figure}[h]
	\centering
	\includegraphics[height=2.5cm, width=4.5cm]{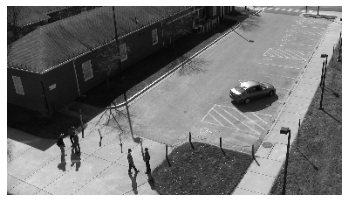}
	\includegraphics[height=2.5cm, width=4.5cm]{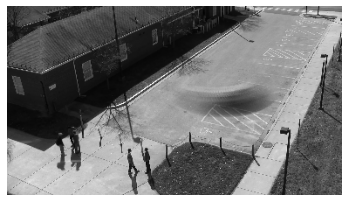}
	\includegraphics[height=2.5cm, width=4.5cm]{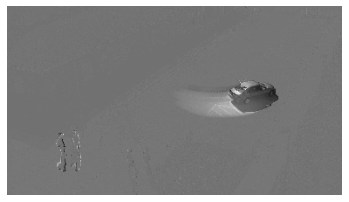}
	\includegraphics[height=2.5cm, width=4.5cm]{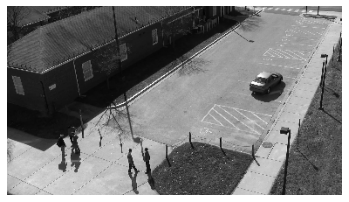}
	\includegraphics[height=2.5cm, width=4.5cm]{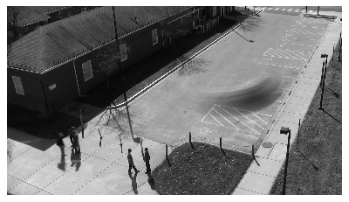}
	\includegraphics[height=2.5cm, width=4.5cm]{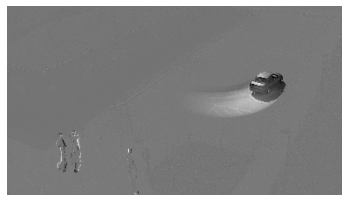}
	\caption{Background subtraction results at two different time points. Each row represents a distinct time point. (Left) Original images captured at the respective time points, (Middle) Background images obtained by the algorithm, (Right) Foreground images obtained by the algorithm.}
	\label{fig1}
	\vspace{-0.3cm}
\end{figure}

\begin{figure}[h]
	\centering
	\includegraphics[height=5cm, width=6cm]{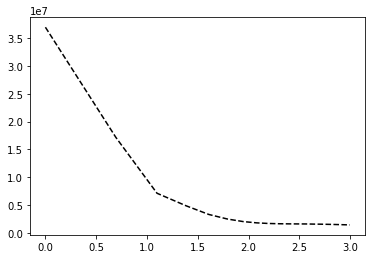}
	\caption{This figure presents the dynamic regret for the background subtraction experiment.}
	\label{fig2}
\end{figure}
\end{example}

\section*{Acknowledgment}
The work of Woocheol Choi was supported by the National Research Foundation of Korea NRF- 2016R1A5A1008055 and Grant NRF-2021R1F1A1059671. The work of Seok-Bae Yun was supported by Samsung Science and Technology Foundation under Project Number SSTF-BA1801-02.

\end{document}